\theoremstyle{plain}
\newtheorem{theorem}{Theorem}
\newtheorem{proposition}[theorem]{Proposition}
\theoremstyle{remark}
\newtheorem{remark}{Remark}
\def\be#1 {\begin{equation} \label{#1}}
\newcommand{\ee}{\end{equation}}
\def\sqw{\hbox{\rlap{\leavevmode\raise.3ex\hbox{$\sqcap$}}$%
\sqcup$}}
\def\findem{\ifmmode\sqw\else{\ifhmode\unskip\fi\nobreak\hfil
\penalty50\hskip1em\null\nobreak\hfil\sqw
\parfillskip=0pt\finalhyphendemerits=0\endgraf}\fi}
\newcommand{\mb}{\medskip\noindent}
\newcommand{\gb}{\bigskip\noindent}
\newcommand{\R}{\mathbb R}
\newcommand{\s}{\mathcal S}
\newcommand{\M}{\mathcal M}
\begin{document}

\date{November 12, 2010}

\title[Fiber-wise Calder\'on-Zygmund decomposition]{\bf  Fiber-wise Calder\'on-Zygmund decomposition and application 
to a bi-dimensional paraproduct}

\author{Fr\'ed\'eric Bernicot}
\address{Fr\'ed\'eric Bernicot
\\
Laboratoire Paul Painlev\'e \\ CNRS - Universit\'e Lille 1 \\
F-59655 Villeneuve d'Ascq, France}
\email{frederic.bernicot@math.univ-lille1.fr}

\subjclass[2000]{Primary 42B15.}

\keywords{Bilinear operators, bilinear multipliers, paraproducts}

\begin{abstract} We are interested in a new kind of bi-dimensional bilinear paraproducts (appearing in \cite{DT}), which do not fit into the setting of bilinear Calder\'on-Zygmund operators. In this paper we propose a fiber-wise Calder\'on-Zygmund 
decomposition, which is  specially adapted to this kind of bi-dimensional paraproduct.
\end{abstract}

\maketitle

\section{Introduction}

Let us first recall what is a standard paraproduct. \\
A bilinear paraproduct $\Pi$ on $\R^d$ is a bilinear operator of the following form~:
$$ \Pi(f,g)(x):= \int_0^\infty \psi_t(f)(x) \phi_t(g)(x) \frac{dt}{t}, $$
where $\psi$ is a smooth function with a spectrum included in a corona around $0$, $\phi$ a smooth function, and for a function $\zeta$, we write the  $L^1$-normalized functions
$$\zeta_{t}(x) := \frac{1}{t^d} \zeta(t^{-1}x).$$
To a function $\zeta_t$, we associate the convolution operator $\zeta_t(f):=\zeta_t \ast f$. So by this way, $\psi_t(f)$ allows us to truncate $f$ in the frequency space around the scale $t^{-1}$. \\
The paraproducts were the first studied singular bilinear operators. Their study began by the works of Bony in 
\cite{bony} and of Coifman and Meyer in \cite{come1,come2,come3}, where in particular first continuities in Lebesgue spaces are shown. Indeed, such paraproducts fit into the setting of bilinear Calder\'on-Zygmund theory (see the work of Grafakos and Torres \cite{GT} and Kenig and Stein \cite{KS}) and so other boundedness can be obtained. Finally, we have boundedness in the full and maximal range of exponents:

\begin{theorem} \label{thm1} For all exponents $1<p,q\leq\infty$ satisfying
$$0< \frac{1}{r}:=\frac{1}{p}+\frac{1}{q}<2,$$
there exists a constant $C$, such that
$$\forall f,g\in\s(\R^d), \qquad \left\| \Pi(f,g) \right\|_{L^r(\R^d)} \leq C \|f\|_{L^{p}(\R^d)}\|g\|_{L^{q}(\R^d)}.$$
\end{theorem}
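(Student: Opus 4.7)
The plan is to recognize $\Pi$ as a bilinear Calder\'on--Zygmund operator in the sense of Grafakos and Torres \cite{GT} and then to read off the full range of exponents from the bilinear interpolation machinery of \cite{GT,KS}. This requires two independent ingredients: a bilinear Calder\'on--Zygmund kernel estimate for $\Pi$, and one initial strong-type boundedness to seed the interpolation.

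For the \emph{kernel estimate}, I would write $\Pi(f,g)(x)=\iint K(x,y_1,y_2)\,f(y_1)\,g(y_2)\,dy_1\,dy_2$ with
$$K(x,y_1,y_2)=\int_0^\infty \psi_t(x-y_1)\,\phi_t(x-y_2)\,\frac{dt}{t},$$
split the $t$-integral at $t\simeq |x-y_1|+|x-y_2|$, and exploit the Schwartz decay of $\psi$ and $\phi$ at each scale. This routinely yields the expected size bound $|K(x,y_1,y_2)|\lesssim (|x-y_1|+|x-y_2|)^{-2d}$ together with the analogous $(2d+1)$-decay for the partial derivatives of $K$ in each of its three variables, which is exactly the framework of \cite{GT}.

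For the \emph{initial strong-type bound}, I would pick the easy endpoint $L^p\times L^\infty\to L^p$ (for some fixed $1<p<\infty$) and prove it by Littlewood--Paley theory. Since $\widehat\psi$ is supported in a corona around the origin, the Fourier support of $\psi_t(f)\phi_t(g)$ stays inside a fixed dilation of the same corona, provided the corona of $\psi$ is wide enough compared with the support of $\widehat\phi$ (which may always be arranged by a minor enlargement). The product is therefore reproduced by a thickened Littlewood--Paley projector $\widetilde\psi_t$, and the vector-valued estimate for these projectors yields
$$\|\Pi(f,g)\|_{L^p}\lesssim \Big\|\Big(\int_0^\infty |\psi_t(f)\,\phi_t(g)|^2\,\frac{dt}{t}\Big)^{1/2}\Big\|_{L^p}.$$
The pointwise majorization $|\phi_t(g)(x)|\lesssim Mg(x)$ by the Hardy--Littlewood maximal function, combined with the square function characterization of $L^p$, closes the estimate; inserting H\"older's inequality inside the square function in fact extends this to $L^p\times L^q\to L^r$ for every $1<p,q\leq\infty$ with $r>1$.

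The main obstacle is the remaining range $r\leq 1$. There the square function characterization of $L^r$ fails, so the Littlewood--Paley argument above no longer reaches those exponents. The standard remedy is a bilinear Calder\'on--Zygmund decomposition producing the weak endpoint $L^1\times L^1\to L^{1/2,\infty}$, which combined with the strong bounds above yields the whole open range by bilinear Marcinkiewicz interpolation. This descent below $L^1$ is exactly the content of the Grafakos--Torres theorem \cite{GT}: once the kernel estimates and the initial strong bound are in hand, that theorem automatically supplies the weak endpoint, and Theorem~\ref{thm1} follows.
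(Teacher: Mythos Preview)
Your proposal is correct and matches the paper's own treatment, which is not a proof at all but a one-sentence sketch: Littlewood--Paley for $p,q,r\in(1,\infty)$, Carleson measures when one exponent is infinite, and bilinear Calder\'on--Zygmund theory for $r\le 1$. You reproduce exactly this outline, with one cosmetic difference: instead of invoking Carleson measures for the $L^\infty$ endpoints, you let the Grafakos--Torres machinery absorb them via duality of the adjoints, which is an equally standard route.

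One small overstatement to flag: your H\"older step ``extends this to $L^p\times L^q\to L^r$ for every $1<p,q\le\infty$ with $r>1$'' is not quite right at $p=\infty$, since the Littlewood--Paley square function $\big(\int_0^\infty|\psi_t f|^2\,dt/t\big)^{1/2}$ is not bounded on $L^\infty$. This is precisely the gap the paper fills with Carleson measures. It does not affect your argument, however, because a single strong bound (say $L^2\times L^2\to L^1$) already seeds the Grafakos--Torres theorem, and the $L^\infty$ endpoints then come for free from the bilinear Calder\'on--Zygmund package.
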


 When $p,q,r\in(1,\infty)$ the boundedness is an easy consequence of the Littlewood-Paley theory, and if one of the exponents $p,q$ is infinite then the proof of the boundedness requires the notion of Carleson measure. Then, Calder\'on-Zygmund theory allows us to get the boundedness for $r\leq 1$.

\mb  For the last years, Lacey and Thiele have solved the famous Calder\'on's conjecture concerning boundedness of bilinear Hilbert transform \cite{lt1,lt2}. This new kind of bilinear operators (far more singular than the previous one) have then given rise to numerous works. One of the main problems, concerning these operators, which are still open is the generalization of these boundedness for the two-dimensional bilinear Hilbert transforms. \\
Demeter and Thiele have begun to describe a partial result in \cite{DT} but boundedness for the following operator is still open : let $f,g\in{\mathcal S}(\R^2)$
$$ H (f,g)(x,y) := p.v. \int_{\R} f(x-z,y) g(x,y+z) \frac{dz}{z}.$$
There are no results for this model operator. The main difficulty is that the singular operation $p.v. (1/z)$ is simultaneously acting on the two different variables $x$ and $y$. The time-frequency decomposition makes also appear a new phenomenon, mixing the two coordinates. \\
In order to understand first this new problem, we aim to study the following operator~:
$$ T(f,g)(x,y):=\int_0^\infty \psi_{t,x}[f](x,y) \psi_{t,y}[g](x,y) \frac{dt}{t},$$
where $\psi_{t,x}$ and $\phi_{t,y}$ are usual convolutions in the $x$ (or $y$) variable. This operator is to $H$ what the classical paraproduct is to the one dimensional bilinear Hilbert transform. So to study $H$, we have first to understand this new bilinear operator $T$, involving the problem of mixing the two coordinates. This operator is the prototype of the singular operators corresponding to the sixth and remaining open case in \cite{DT}. Note that $T$ can be viewed as a limiting (degenerate) case of some of the other cases (cases 1, 4, 5) in \cite{DT}.

\gb We emphasize that here, we do not prove any boundedness for such operators. It seems that standard arguments (Littlewood-Paley and Calder\'on-Zygmund theories) cannot be employed to prove one boundedness of this new kind of paraproducts and a new approach should be required. However, we propose here an extension result (similar to what is known for classical paraproducts): starting from one supposed boundedness, we then deduce other continuities by making lower the Lebesgue exponents. More precisely, we will prove the following result:

\begin{theorem} \label{thm} Assume that $T$ is bounded from $L^{p_0}(\R^2) \times L^{q_0}(\R^2)$ to $L^{r_0,\infty}(\R^2)$ for some exponents $p_0,q_0\in(1,\infty)$ satisfying
$$\frac{1}{r_0}=\frac{1}{p_0}+\frac{1}{q_0}.$$ 
Then for all exponents $p,q,r$ satisfying $p\in[1,p_0]$, $q\in[1,q_0]$ and 
$$ \frac{1}{r_0}<\frac{1}{r}=\frac{1}{p}+\frac{1}{q}\leq 2,$$
\begin{itemize}
 \item $T$ admits a bounded extension from $L^p(\R^2) \times L^q(\R^2)$ to $L^r(\R^2)$ if $p,q>1$;
 \item $T$ admits a bounded extension from $L^p(\R^2) \times L^q(\R^2)$ to $L^{r,\infty}(\R^2)$ if $p=1$ or/and $q=1$.
\end{itemize}
 \end{theorem}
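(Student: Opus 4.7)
The plan is to derive Theorem~\ref{thm} by bilinear Marcinkiewicz interpolation from the hypothesis together with three weak-type endpoint bounds proved via a fiber-wise Calder\'on--Zygmund decomposition. In $(1/p,1/q)$-coordinates the target region is the parallelogram with corners $(1/p_0,1/q_0)$, $(1,1/q_0)$, $(1/p_0,1)$ and $(1,1)$, with the lower corner excluded by the strict inequality $1/r>1/r_0$. The hypothesis already supplies weak type at $(1/p_0,1/q_0)$; the three remaining corners to be established are $T:L^1\times L^{q_0}\to L^{s_1,\infty}$ with $1/s_1=1+1/q_0$, $T:L^{p_0}\times L^1\to L^{s_2,\infty}$ with $1/s_2=1/p_0+1$, and the doubly endpoint $T:L^1\times L^1\to L^{1/2,\infty}$. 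Bilinear Marcinkiewicz interpolation between these four weak-type bounds then produces strong type in the interior of the parallelogram and weak type on the edges $p=1$ and $q=1$, which is exactly what is claimed.

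For the representative corner $(1,q_0)$, normalize $\|f\|_1=\|g\|_{q_0}=1$. Given $\lambda>0$, apply the 1D Calder\'on--Zygmund decomposition in the variable $x$ to the scalar function $F(x):=\|f(x,\cdot)\|_{L^1(\R_y)}\in L^1(\R_x)$ at a level $\mu\simeq\lambda^\alpha$ (the exponent $\alpha$ is tuned at the end), obtaining a disjoint family $\{I_j\}\subset\R_x$ with $\sum_j|I_j|\lesssim\mu^{-1}$ and a splitting $f=g_f+\sum_j b_j$ in which $b_j$ is supported in the horizontal tube $I_j\times\R_y$ with $\int_{I_j}b_j(x,y)\,dx=0$ for a.e.\ $y$, while $\|g_f(x,\cdot)\|_{L^1(\R_y)}\lesssim\mu$ uniformly in $x$. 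Define the exceptional set $E^*:=\bigcup_j I_j^*\times\R_y$ with $I_j^*$ a suitable dilate; its $x$-projection has measure $\lesssim\mu^{-1}$. Outside $E^*$, the contribution $T(b_j,g)$ is controlled by splitting the $t$-integral at $|I_j|$: for $t\lesssim|I_j|$, $\psi_{t,x}[b_j]$ is supported near $I_j$ and therefore vanishes off $E^*$; for $t\gtrsim|I_j|$, the vanishing $x$-mean of $b_j$ combined with the smoothness of $\psi$ gives a pointwise bound on $\psi_{t,x}[b_j]$ with a gain $|I_j|/t$, which after integrating $\frac{dt}{t}$ against $\psi_{t,y}[g]$ and summing in $j$ is absorbed using the uniform $L^{q_0}(\R_y)$ control on $g$. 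For the good--good piece $T(g_f,g)$, a matched fiber-wise decomposition of $g$ in the $y$-direction places one factor in $L^{p_0}(\R^2)$ and the other in $L^{q_0}(\R^2)$, after which the hypothesis applies; the mixed pieces combine the two cancellations.

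The main obstacle is precisely that the ``good'' function $g_f$ of a single fiber-wise decomposition lies only in the mixed norm $L^\infty(\R_x;L^1(\R_y))$ and \emph{not} in $L^{p_0}(\R^2)$, so the scalar 1D trick of invoking the hypothesis directly on $g_f$ is unavailable. This forces the decomposition to be two-sided, in $x$ for $f$ and in $y$ for $g$, with the two CZ levels $\mu$ and $\nu$ carefully coupled to each other and to the scale $t$ driving the integral defining $T$. Making the $x$-cancellation of the $b_j$'s and the $y$-cancellation of the $b'_k$'s interact coherently at a common scale $t$, while summing over the two independent families $\{I_j\}\subset\R_x$ and $\{J_k\}\subset\R_y$ and optimizing $\alpha$ to recover exactly the exponent $s_1$, is the chief technical difficulty and the novelty of the fiber-wise decomposition announced in the title. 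The corner $(p_0,1)$ is handled by the symmetric argument decomposing $g$ in $y$, and the doubly endpoint $(1,1)$ follows by running both fiber-wise decompositions simultaneously at the matched level $\mu=\nu\simeq\lambda^{1/2}$.
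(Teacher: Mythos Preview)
Your reduction to weak-type endpoints via bilinear Marcinkiewicz interpolation matches the paper, but the fiber-wise decomposition you propose is not the one that works, and the two obstacles you raise are artifacts of this wrong choice rather than genuine difficulties.

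You decompose the scalar function $F(x)=\|f(x,\cdot)\|_{L^1(\R_y)}$ in the $x$-variable, obtaining intervals $I_j\subset\R_x$ that are \emph{independent of $y$}. This immediately creates two fatal problems. First, the exceptional set $E^*=\bigcup_j I_j^*\times\R_y$ is a union of infinite vertical strips and has \emph{infinite} $2$-dimensional measure; knowing only that its $x$-projection has measure $\lesssim\mu^{-1}$ is useless for the weak-type bound, so you cannot discard $E^*$. Second, as you yourself observe, the good part lies only in $L^\infty(\R_x;L^1(\R_y))$ and not in $L^{p_0}(\R^2)$, so the hypothesis cannot be applied to it. Your proposed fix --- a simultaneous two-sided decomposition with coupled levels --- is vague, and in any case unnecessary.

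The paper's decomposition is different and much simpler: for each \emph{fixed} $y\in\R$, apply the one-dimensional Calder\'on--Zygmund decomposition to the fiber $f(\cdot,y)\in L^1(\R_x)$ at level $\gamma=\alpha^{s}\|f\|_1^{1-s}$, obtaining $y$-dependent intervals $Q_{i,y}$ and a splitting $f(\cdot,y)=b_y+\sum_i a_{i,y}$. Now the good part $b(x,y):=b_y(x)$ satisfies the \emph{pointwise} bound $\|b\|_{L^\infty(\R^2)}\lesssim\gamma$ together with $\|b\|_{L^1(\R^2)}\le\|f\|_1$, hence $b\in L^{p_0}(\R^2)$ by interpolation and the hypothesis applies directly to $T(b,g)$. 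The exceptional set $\{(x,y):x\in\bigcup_i 2Q_{i,y}\}$ has finite measure $\lesssim\gamma^{-1}\|f\|_1$ by integrating $\sum_i|Q_{i,y}|\lesssim\gamma^{-1}\|f(\cdot,y)\|_{L^1}$ in $y$. For the bad part off this set, the crucial structural fact is that $T(f,g)(x,y)$ depends only on the single fiber $f(\cdot,y)$, so the vanishing mean of each $a_{i,y}$ yields the usual off-diagonal decay fiber by fiber; combined with $|\phi_{t,y}[g]|\le\M_y g$ this gives a pointwise bound by $\gamma\,H(x,y)\,\M_y g(x,y)$ with $\|H\|_{L^1(\R^2)}\lesssim\gamma^{-1}\|f\|_1$, and H\"older in weak Lebesgue spaces finishes. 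No two-sided coupling is ever needed: the $(1,1)$ corner follows by iterating the one-sided proposition, first passing from $(p_0,q_0)$ to $(1,q_0)$ and then from $(1,q_0)$ to $(1,1)$.
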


\mb
We postpone the proof to the next section. Moreover, we will give some comments concerning the dual operators of $T$ too. 

\mb The proof relies on a fiber-wise Calder\'on-Zygmund decomposition. We move the reader to \cite{NOT} for a multiple frequencies Calder\'on-Zgymund decomposition, well-adapted to multi-frequency inequalities. The fiber-wise decomposition, presented here, can be seen as the analogue for the bi-dimensional analysis. Moreover, we point out that Fefferman has already used a more complicated fiber-wise Calder\'on-Zygmund decomposition in \cite{F} to study the double Hilbert transform.

\section{The extension of boundedness result}

The section is devoted to the proof of Theorem \ref{thm}. By bilinear Marcinkiewicz interpolation between weak type inequalities (see \cite{J}), Theorem \ref{thm} is reduced to the following proposition~:

\begin{proposition} \label{prop1} Let assume that $T$ is bounded from $L^{p}(\R^2) \times L^{q}(\R^2)$ to $L^{r,\infty}(\R^2)$ for some exponents $p\geq 1$, $q\geq 1$ and
$$\frac{1}{r}=\frac{1}{p}+\frac{1}{q}.$$ 
Then 
\begin{itemize}
\item if $p>1$, $T$ admits a bounded extension from $L^1(\R^2) \times L^{q}(\R^2)$ to $L^{s,\infty}(\R^2)$ with  
$$ \frac{1}{s}=1+\frac{1}{q};$$
\item if $q>1$, $T$ admits a bounded extension from $L^{p}(\R^2) \times L^{1}(\R^2)$ to $L^{s,\infty}(\R^2)$ with  
$$ \frac{1}{s}=\frac{1}{p}+1.$$
\end{itemize}
 \end{proposition}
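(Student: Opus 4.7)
The plan is to run a Calder\'on--Zygmund endpoint argument with a \emph{fiber-wise} decomposition adapted to the tensor structure of $T$. I treat only the first bullet; the second is identical after swapping the roles of $(f,x)$ and $(g,y)$, since $\psi_{t,x}$ convolves in $x$ alone (so the cancellation required of the bad part of $f$ must be in $x$) and symmetrically $\psi_{t,y}$ in $y$.

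Fix $\alpha>0$, normalize $\|f\|_{L^{1}(\R^{2})}=\|g\|_{L^{q}(\R^{2})}=1$, and set $\lambda:=\alpha^{s}$ with $1/s=1+1/q$. For a.e.\ $y\in\R$ apply the one-dimensional Calder\'on--Zygmund decomposition in $x$ to the fiber $f(\cdot,y)\in L^{1}_{x}$ at height $\lambda$, obtaining $f(\cdot,y)=G(\cdot,y)+\sum_{j}B_{j,y}$ with $B_{j,y}$ supported in an interval $I_{j}(y)\subset\R$ of length $\ell_{j}(y)$ and center $c_{j,y}$, zero $x$-mean, $\|B_{j,y}\|_{L^{1}_{x}}\lesssim \lambda\ell_{j}(y)$, the $I_{j}(y)$ pairwise disjoint with $\sum_{j}\ell_{j}(y)\lesssim \lambda^{-1}\|f(\cdot,y)\|_{L^{1}_{x}}$, and $|G(\cdot,y)|\lesssim\lambda$. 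Integrating the fiber bound $\|G(\cdot,y)\|_{L^{p}_{x}}^{p}\lesssim\lambda^{p-1}\|f(\cdot,y)\|_{L^{1}_{x}}$ in $y$ gives $\|G\|_{L^{p}(\R^{2})}\lesssim\lambda^{1-1/p}$, while the dilated exceptional set $E:=\{(x,y):x\in I_{j}^{*}(y)\text{ for some }j\}$ has measure $|E|\lesssim 1/\lambda$.

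For the good part, the hypothesis yields $|\{|T(G,g)|>\alpha/2\}|\lesssim \alpha^{-r}\lambda^{(1-1/p)r}=\alpha^{-s}$, the last identity coming from $1/r=1/p+1/q$ and $1/s=1/r+1/p'$. For the bad part, the zero $x$-mean of $B_{j,y}$ is matched to the convolution $\psi_{t,x}$: a standard one-dimensional Calder\'on--Zygmund kernel estimate (Taylor expansion of $\psi_{t}$ at $c_{j,y}$ when $t\gtrsim\ell_{j}(y)$, direct decay of $\psi_{t}$ otherwise) gives, for $x\notin I_{j}^{*}(y)$,
$$\int_{0}^{\infty}|\psi_{t,x}[B_{j,y}](x,y)|\,\frac{dt}{t}\lesssim \lambda\,\frac{\ell_{j}(y)^{2}}{(\ell_{j}(y)+|x-c_{j,y}|)^{2}}.$$
Factoring out $|\psi_{t,y}[g](x,y)|\le C\,M_{y}g(x,y)$, with $M_{y}$ the Hardy--Littlewood maximal operator in the $y$-variable alone, and summing in $j$, one obtains $|T(B,g)|\lesssim \lambda\,\Phi\,M_{y}g$ on $E^{c}$, where $\Phi(x,y):=\sum_{j}\ell_{j}(y)^{2}(\ell_{j}(y)+|x-c_{j,y}|)^{-2}$ has $\|\Phi\|_{L^{1}(\R^{2})}\lesssim 1/\lambda$ by integrating first in $x$ and then in $y$. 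Splitting $\{\lambda\Phi M_{y}g>c\alpha\}\subset\{\Phi>\mu_{1}\}\cup\{M_{y}g>c\alpha/(\lambda\mu_{1})\}$, applying Chebyshev with $\|\Phi\|_{1}\lesssim 1/\lambda$ and the $L^{q}(\R^{2})$-boundedness of $M_{y}$ (valid since $q>1$), and optimizing in $\mu_{1}$ yields $|E^{c}\cap\{|T(B,g)|>\alpha/2\}|\lesssim \alpha^{-s}$.

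Collecting $|E|+|\{|T(G,g)|>\alpha/2\}|+|E^{c}\cap\{|T(B,g)|>\alpha/2\}|\lesssim \alpha^{-s}$ closes the argument. The main technical obstacle lies in the pointwise control on $E^{c}$: the $t$-integral must be split at $t\sim\ell_{j}(y)$ so that the Taylor cancellation handles large $t$ and the raw decay of $\psi_{t}$ handles small $t$; the summation over $j$ uniformly in $y$ then relies on the disjointness of the intervals $I_{j}(y)$ at each fixed fiber $y$, which is precisely what makes the \emph{fiber-wise} (rather than full two-dimensional) decomposition the right tool here.
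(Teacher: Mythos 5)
Your argument follows the paper's approach essentially step by step: same fiber-wise Calder\'on--Zygmund decomposition at height $\alpha^{s}$, same $L^{p}$ bound on the good part, same exceptional set, same kernel estimate exploiting the vanishing $x$-mean of the bad pieces and the factorization $|\psi_{t,y}[g]|\lesssim M_{y}g$, and the same resulting majorant $\lambda\,\Phi\,M_{y}g$ on $E^{c}$. The only cosmetic difference is at the final step: you split the superlevel set and optimize in $\mu_{1}$, whereas the paper applies H\"older's inequality directly in weak Lebesgue spaces, $\|H\,M_{y}g\|_{L^{s,\infty}}\le\|H\|_{L^{1,\infty}}\|M_{y}g\|_{L^{q,\infty}}$; these are the same estimate.

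There are, however, two points you should tighten. First, and more importantly, you never address the measurability of the fiber-wise decomposition. You decompose $f(\cdot,y)$ \emph{for each $y$ separately}; the stopping-time selection of cubes $I_{j}(y)$ and the resulting good part $G(x,y)$ are a priori defined fiber by fiber, and without further care there is no reason for $G$ (or $\Phi$) to be a measurable function of $(x,y)$, so expressions such as $\|G\|_{L^{p}(\R^{2})}$ and $\|\Phi\|_{L^{1}(\R^{2})}$ are not yet meaningful. The paper handles this by first restricting to the dense subclass of $f$ that are finite sums $\sum_{j}f_{j}^{1}(x)\mathbf{1}_{E_{j}}(y)$ with disjoint $E_{j}$ --- then the CZ decomposition is constant in $y$ on each $E_{j}$, measurability is automatic, and the full statement follows by density. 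You should either import this reduction or give another argument for measurability; it is the raison d'\^etre of ``fiber-wise'' here and the paper devotes a remark to it. Second, your parenthetical ``the $L^{q}(\R^{2})$-boundedness of $M_{y}$ (valid since $q>1$)'' is a small error: the hypothesis of the first bullet is $p>1$, \emph{not} $q>1$, and indeed the iteration towards Theorem~\ref{thm} requires this bullet also at $q=1$. The fix is immediate and costs nothing: replace strong $(q,q)$ by the weak-type $(q,q)$ bound for $M_{y}$, which holds for all $q\ge1$ (the paper does exactly this).
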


\begin{proof} The paraproduct $T$ is almost symmetric in $f$ and $g$. The proof, presented below,  does not distinguish between $\psi$ and $\phi$ and therefore by symmetry, we only prove the first claim for $p>1$. \\
So assume that $p>1$ and let us fix a function $g$, which can be assumed normalized without loss of generality : $\|g\|_{L^{q}(\R^2)}=1$. \\
Let us introduce a well-adapted dense subspace ${\mathcal D}$ of $L^{1}(\R^2)$: ${\mathcal D}$ is the set of functions $f\in L^1(\R^2)$ such that there exist a finite sequence $(f^1_j)_j$ of $L^1(\R)$ and a finite sequence of pairwise disjoint measurable sets $(E_j)_j$ of $\R$ verifying
\be{eq:de} f(x,y) = \sum_j f^1_j(x) {\bf 1}_{E_j}(y).\ee
In fact, ${\mathcal D}$ corresponds to the tensor product $L^1(\R) \otimes L^1(\R)$. \\
We aim to prove that there exists a constant $c$ such that for all $\alpha>0$ and every function $f\in{\mathcal D}$
\be{eq:am}
\left|\left\{ (x,y)\in\R^2,\ |T(f,g)(x,y)|>\alpha\right\}\right| \leq c \alpha^{-s} \|f\|_{L^1(\R^2)}^s. \ee
Using that set ${\mathcal D}$ is dense into $L^1(\R^2)$ and into $L^1(\R^2) \cap L^{p}(\R^2)$, we deduce that $T$ admits an extension which is bounded from $L^1(\R^2) \times L^q(\R^2)$ to $L^{s,\infty}(\R^2)$. \\
Aiming to prove (\ref{eq:am}), we fix $\alpha>0$ and a function $f\in{\mathcal D}$. For every fixed $y\in\R$, by definition of ${\mathcal D}$, $f(\cdot,y)$ is a $L^1(\R)$-function  and so we can choose a Calder\'on-Zygmund decomposition at the scale $\gamma:=\alpha^s \|f\|_{L^1(\R^2)}^{1-s}$ (see \cite{CZ}).
So there exist functions $b_y$ and atoms $(a_{i,y})_i$ supported on cubes $Q_{i,y}$ such that
\begin{itemize}
 \item $f(\cdot,y) = b_y + \sum_{i} a_{i,y};$
 \item for all $y$, we have $\|b_y\|_{L^1(\R)} \leq \|f(\cdot,y)\|_{L^1(\R)}$ and $\|b_y\|_{L^\infty(\R)} \lesssim \gamma$;
 \item for all $i$, the atom $a_{i,y}$ has a vanishing mean value on $Q_{i,y}$ and $\|a_{i,y}\|_{L^1(Q_{i,y})}\lesssim \gamma |Q_{i,y}|$;
 \item for all $y$, $\sum_{i} |Q_{i,y}|\lesssim \gamma^{-1} \|f(\cdot,y)\|_{L^1(\R)}$.
\end{itemize}
 Then, let us define $b(x,y) := b_y(x)$ and $a(x,y):=\sum_{i} a_{i,y}(x)$. It is easy to see that $b$ is in fact measurable in $\R^2$. Indeed using decomposition (\ref{eq:de}) of $f$, we can write
 $$ b(x,y) = \sum_j b^1_j(x) {\bf 1}_{E_j}(y)$$
where $b^1_j$ is the ``good part'' of $f^1_j$ coming from its Calder\'on-Zygmund decomposition. \\
So $b$ is measurable and we have $ \|b\|_{L^\infty(\R^2)} \lesssim \gamma$ and $\|b\|_{L^1(\R^2)}\leq \|f\|_{L^1(\R^2)}$. Thus by interpolation,
 \be{eq:b} \|b\|_{L^p(\R^2)} \lesssim \gamma^{1/p'} \|f\|_{L^1(\R^2)}^{1/p}.\ee
 By splitting, $T(f,g)=T(b,g)+ T(a,g)$, it suffices to study the two terms. \\
 The first one can be bounded using the previous inequality (\ref{eq:b}) and the assumed boundedness of $T$ as follows~:
 \begin{align*}
  \left|\left\{ (x,y)\in\R^2,\ |T(b,g)(x,y)|>\alpha\right\}\right|& \\
  & \hspace{-2cm} \leq \alpha^{-r} \|T(b,g)\|_{L^{r,\infty}(\R^2)}^{r} \lesssim \alpha^{-r} \|b\|_{L^{p}(\R^2)}^r \\
  & \hspace{-2cm} \lesssim \alpha^{-r} \gamma^{r/p'} \|f\|_{L^1(\R^2)}^{r/p} \lesssim \alpha^{-r+sr/p'} \|f\|_{L^1(\R^2)}^{r/p+(1-s)r/p'} \\
  &  \hspace{-2cm} \lesssim \alpha^{-s} \|f\|_{L^1(\R^2)}^s,
 \end{align*}
where we used that
$$ \frac{1}{s}-\frac{1}{r}=\frac{1}{p'}\quad \textrm{and so} \quad \frac{sr}{p'} = r-s.$$
So this term is acceptable according to (\ref{eq:am}). \\
Then, as usual we expect to have pointwise bounds of $T(a_{i,y},g)$  outside the ball $2Q_{i,y}$. So we have first to remove this set. This is possible since
\begin{align*}
\left|\left\{ (x,y),\ x\in \cup_{i} 2Q_{i,y}\right\} \right| & \lesssim \int_\R \left|\cup_i 2Q_{i,y}\right| dy  \lesssim \gamma^{-1}\int_\R \|f(\cdot,y)\|_{L^1(\R)} dy \\
& \lesssim \gamma^{-1}\|f\|_{L^1(\R^2)} \lesssim \alpha^{-s} \|f\|_{L^1(\R^2)}^{s-1+1} \\
& \lesssim \alpha^{-s} \|f\|_{L^1(\R^2)}^s.
\end{align*}
It remains us to study the following term:
$$ I:=\left|\Big\{ (x,y),\ x\in \left(\cup_{i} 2Q_{i,y}\right)^c, \quad |T(a,g)(x,y)| > \alpha  \Big\} \right|.$$
First, we use the specific structure of $T$ to deduce
\begin{align}
 T(a,g)(x,y) & = \int_0^\infty \psi_{t,x}[a](x,y) \phi_{t,y}[g](x,y) \frac{dt}{t} \nonumber \\
 & = \int_0^\infty \sum_{i} \psi_{t}[a_{i,y}](x) \phi_{t,y}[g](x,y) \frac{dt}{t} \label{eq:important}.
 \end{align}
 Then fix $y\in\R$ and take $x\in (\cup_i 2Q_{i,y})^c$, we have to bound the right hand side of (\ref{eq:important}). We denote $r_{Q_{i,y}}$ for the radius of the ball $Q_{i,y}$ and $\M_{y}$ the Hardy-Littlewood maximal function acting on the $y$-variable.  Using the vanishing mean value of $a_{i,y}$ and that $\phi_{t,y}$ is bounded by $\M_y$, it comes with $c_{i,y}$ the center of the ball $Q_{i,y}$
\begin{align*}
 \left|T(a,g)(x,y)\right| & \\
 & \hspace{-1.5cm} \leq \left(\int_0^\infty \sum_i \int_{Q_{i,y}} \left|a_{i,y}(z)\right| \left|\psi_t(x-z)-\psi_{t}(x-c_{i,y})\right| \; dz \frac{dt}{t}\right) \M_{y}g(x,y) \\
 & \hspace{-1.5cm} \leq \left(\int_0^\infty \sum_i \int_{Q_{i,y}} \left|a_{i,y}(z)\right| \frac{r_{Q_{i,y}}}{t^2} \left(1+\frac{|x-c_{i,y}|}{t}\right)^{-M} \; dz \frac{dt}{t}\right) \M_{y}g(x,y) \\
 &  \hspace{-1.5cm} \leq \sum_i \left\|a_{i,y}\right\|_{L^1(Q_{i,y})} \frac{r_{Q_{i,y}}}{|x-c_{i,y}|^2}  \M_{y}g(x,y) \\
 & \hspace{-1.5cm} \leq \gamma \sum_i |Q_{i,y}| \frac{r_{Q_{i,y}}}{|x-c_{i,y}|^2}  \M_{y}g(x,y).
 \end{align*}
Consequently, it comes for $x\in \left(\cup_{i} 2Q_{i,y}\right)^c$
$$  |T(a,g)(x,y)| \lesssim \gamma H(x,y)\M_{y}g(x,y),$$
with 
$$ H(x,y):= \sum_{i}  |Q_{i,y}| \frac{r_{Q_{i,y}}}{|x-c_{i,y}|^2}{\bf 1}_{(2Q_{i,y})^c}(x) .$$ 
Since the function $f\in{\mathcal D}$ can be decomposed with tensor products (see (\ref{eq:de})), the function $H$ can also be written with tensor products which implies that $H$ is measurable.
Since $s^{-1}=1+q^{-1}$, H\"older inequality for weak Lebesgue spaces yields~:
 \begin{align*}
  I & \leq \alpha^{-s} \gamma^s \left\| H\right\|_{L^{1,\infty}(\R^2)}^s  \left\| \M_y[g] \right\|_{L^{q,\infty}(\R^2)}^{s}. 
  \end{align*}
Since $q\geq 1$,  $\M_y$ is of weak type $(q,q)$ and since $g$ is normalized in $L^q(\R^2)$ we have
\begin{align*}
 I & \leq \alpha^{-s} \gamma^s \left\|H\right\|_{L^{1,\infty}(\R^2)}^s \leq \alpha^{-s} \gamma^s \left\|H\right\|_{L^{1}(\R^2)}^s. 
 \end{align*}
Moreover, for all $y\in\R$, we get
\begin{align*}
 \int_\R H(x,y) dx & \leq \sum_{i} |Q_{i,y}| \int_{(2Q_{i,y})^c} \frac{r_{Q_{i,y}}}{|x-c_{i,y}|^2} dx \lesssim \sum_{i} |Q_{i,y}| \\ 
 & \lesssim \gamma^{-1}\|f(\cdot,y)\|_{L^1(\R)},
\end{align*}
which yields $\|H\|_{L^1(\R^2)} \lesssim \gamma^{-1} \|f\|_{L^1(\R^2)}$. Hence, we finally obtain that
\begin{align*}
I & \lesssim \alpha^{-s} \gamma^s (\gamma^{-1}\|f\|_{L^1(\R^2)})^s = \alpha^{-s} \|f\|_{L^1(\R^2)}^s.
   \end{align*}
That exactly corresponds to the desired estimate (\ref{eq:am}) for this term, which ends the proof.
 \end{proof}

\begin{remark} The previous proof is based on a fiber-wise Calder\'on-Zygmund decomposition for $f\in L^1(\R^2)$. We have worked for functions belonging to a dense subspace, in order to get around technical problems of measurability. Indeed for $f\in L^1(\R^2)$, we know that for almost every $y\in\R$, we can consider the function $f(\cdot,y)\in L^1(\R)$. Applying the standard Calder\'on-Zygmund decomposition, we obtain $f(\cdot,y)=b_y+a_y$. Then we define a global ``good part'' of the function $f$, by defining $b(x,y):=b_y(x)$ for almost every $y\in\R$. But to compute estimates on $b$, it is important to first check that $b$ is measurable in $\R^{2}$. In general, this seems to be not obvious, it is related to problems involving measurable selections (in the stopping time argument of the Calder\'on-Zygmund decomposition). \\
That is why, here we have prefered to work with specific functions $f$ which can be split into tensor products and then the measurability is easily obtained.
 \end{remark}

\gb Having concluded the proof of our main result Theorem \ref{thm}, we are now interested in the dual operators of $T$.
Let us compute the two dual operators $T^{*1}$ and $T^{*2}$ (with respect to $f$ or $g$), it comes~:
$$ T^{*1}(h,g)(x,y) = \int_0^\infty \psi_{t,x}[h \phi_{t,y}(g)](x,y) \frac{dt}{t}$$
and 
$$ T^{*2}(f,h)(x,y) = \int_0^\infty \phi_{t,y}[ \psi_{t,x}(f) h ](x,y) \frac{dt}{t}.$$

\gb As we have remarked during the previous proof for (\ref{eq:important}), we have used a specific property of the operator $T$. More precisely, we have employed the fact that to compute $T(f,g)(x,y)$ requires only information on $f(\cdot,y)$ and not on the whole function $f$ (and similarly for $g$), in order that for all $(x,y)\in\R^2$
$$ T(f,g)(x,y) := T(f(\cdot,y),g(x,\cdot))(x,y).$$
This fact is very important in the proof and allowed us to use the standard Calder\'on-Zygmund decomposition. It is interesting to emphasize that this property is not satisfied for the two dual operators $T^{*1}$ and $T^{*2}$. Indeed to compute $T^{*1}(h,g)(x,y)$, we require information on the whole function $g$ (since we have the first operator $\phi_{t,y}$ in the $y$ variable and then the second one $\psi_{t,x}$ in the $x$ variable) and on the function $h$ (we have just an operator $\psi_{t,x}$ on the $x$ variable but a vanishing mean value of $h$ in one of the two variables does not bring an extra decay in order to repeat the previous arguments). So we cannot reproduce the previous reasoning for the dual operators and we do not know if such an extension result is true for them.

\mb The operator $T$ is the prototype operator of those described by the last and sixth case in \cite{DT}. Moreover, we note that the operators appearing in the cases 1,4 and 5 of \cite{DT} do not satisfy this specific structure of $T$ (like the dual operators $T^{*1}$ and $T^{*2}$) too and so cannot be studied by this fiber-wise Calder\'on-Zygmund decomposition.


\begin{thebibliography}{99}
 
 \bibitem{bony} J.M.\ Bony, 
 \newblock Calcul symbolique et propagation des singularit\'es pour les \'equations aux d\'eriv\'ees partielles non lin\'eaires.
 \newblock {\it Ann. Sci. Eco. Norm. Sup.} \textbf{14} (1981), 209--246.
 
 \bibitem{CZ} 
 A.P. Calder\'on and A. Zygmund, 
 \newblock On the existence of certain singular integrals.
 \newblock {\it Acta Math.} \textbf{88} (1952), 85--139.
 
 \bibitem{come1} 
 R.R. Coifman and Y. Meyer, 
 \newblock On commutators of singular integrals and bilinear singular integrals.
\newblock {\it Trans. Amer. Math. Soc.} {\bf 212} (1975), 315-331.

\bibitem{come2} 
R.R. Coifman and Y. Meyer,
\newblock Commutateurs d'int\'egrales singuli\`ers et op\'erateurs multilin\'eaires.
\newblock {\it Ann. Inst. Fourier Grenoble} {\bf 28} (1978), 177-202.

\bibitem{come3} 
R.R. Coifman and Y. Meyer, 
\newblock Au-del\`a des op\'erateurs pseudo-diffe\'rentiels.
\newblock Ast\'erisque 57, Soci\'et\'e Math. de France, 1978.

\bibitem{DT}
C. Demeter and C. Thiele,
\newblock On the two dimensional Bilinear Hilbert Transform.
\newblock {\it Amer. J. Math.} \textbf{132} (2010), no. 1, 201--256.
 
\bibitem{F} 
C. Fefferman,
\newblock Estimates for double Hilbert transforms.
\newblock {\it Studia Math.} \textbf{44} (1972), 1--15. 
 
 \bibitem{GK}
 L. Grafakos and N. Kalton,
 \newblock Some remarks  on multilinear maps and interpolation.
 \newblock {\it Math. Ann.} \textbf{319} (2001), 151--180.
 
\bibitem{GT} 
L. Grafakos and R. H. Torres, 
\newblock Multilinear Calder\'{o}n-Zygmund theory.
\newblock {\it Adv. in  Math.} {\bf 165} (2002), 124-164. 

\bibitem{J}
S. Janson, 
\newblock On interpolation of multilinear operators, Function spaces and applications. 
\newblock {Lect. Notes in Math.} \textbf{1302} (1998), 290--302, Springer, Berlin-New York.
 
\bibitem{KS}
C. Kenig and E. Stein , 
\newblock Multilinear estimates and fractional integration.
\newblock {\it Math. Res. Lett.} \textbf{6} (1999), 1--15.
 
\bibitem{lt1} 
M. Lacey and C. Thiele, 
\newblock $L^p$ bounds for the bilinear Hilbert transform, $2<p<\infty$.
\newblock {\it Ann. of Math.} {\bf 146} (1997), 693--724.

\bibitem{lt2} 
M. Lacey and C. Thiele, 
\newblock On Calder\'on's conjecture.
\newblock {\it Ann. of Math.} {\bf 149} (1999), 475--496. 
 
\bibitem{NOT}
F. Nazarov, R. Oberlin and C. Thiele,
\newblock A Calder\'on-Zygmund decomposition for multiple frequencies and an application to an extension of a lemma of Bourgain.
\newblock {\it Math. Res. Lett.} \textbf{17} (2010), no. 3, 529--545.
 
 \bibitem{Z}
 A. Zygmund,
 \newblock On a theorem of Marcinkiewicz concerning interpolation of operations.
 \newblock {\it Journ. Math. Pures et App.} \textbf{35} (1956), 223--248.
 
\end{thebibliography}
\end{document}